\newtheorem{thm}{Theorem}
\newtheorem{cor}{Corollary}
\newtheorem{lem}{Lemma}
\newtheorem{rem}{Remark}
\newtheorem{prop}{Proposition}
\newtheorem{conj}{Conjecture}
\newtheorem{prob}{Problem}
\theoremstyle{definition}
\newtheorem{defn}{Definition}[section]
\newtheorem{example}{Example}
\newenvironment{pf}[1][]{%
 \vskip 1mm
 \noindent
 \ifthenelse{\equal{#1}{}}%
  {{\slshape Proof. }}%
  {{\slshape #1.} }%
 }%
{\qed\bigskip}
\newcounter{alphabet}
\newcounter{tmp}
\newenvironment{Thm}[1][]{\refstepcounter{alphabet}%
\bigskip%
\noindent%
{\bf Theorem \Alph{alphabet}}%
\ifthenelse{\equal{#1}{}}{}{ (#1)}%
{\bf .} \itshape}{\vskip 8pt}
\newcommand{\Ref}[1]{\@ifundefined{r@#1}{}{\setcounter{tmp}{\ref{#1}}\Alph{tmp}}}
\newenvironment{Lem}[1][]{\refstepcounter{alphabet}%
\bigskip%
\noindent%
{\bf Lemma \Alph{alphabet}}%
{\bf .} \itshape}{\vskip 8pt}
\newcommand{\IC}{{\mathbb C}}
\newcommand{\ID}{{\mathbb D}}
\def\be{\begin{equation}}
\def\ee{\end{equation}}
\newcommand{\bee}{\begin{enumerate}}
\newcommand{\eee}{\end{enumerate}}
\newcommand{\blem}{\begin{lem}}
\newcommand{\elem}{\end{lem}}
\newcommand{\bthm}{\begin{thm}}
\newcommand{\ethm}{\end{thm}}
\newcommand{\bcor}{\begin{cor}}
\newcommand{\ecor}{\end{cor}}
\newcommand{\beg}{\begin{example}}
\newcommand{\eeg}{\end{example}}
\newcommand{\begs}{\begin{examples}}
\newcommand{\eegs}{\end{examples}}
\newcommand{\bdefe}{\begin{defn}}
\newcommand{\edefe}{\end{defn}}
\newcommand{\bprob}{\begin{prob}}
\newcommand{\eprob}{\end{prob}}
\newcommand{\bques}{\begin{ques}}
\newcommand{\eques}{\end{ques}}
\newcommand{\bei}{\begin{itemize}}
\newcommand{\eei}{\end{itemize}}
\newcommand{\bcon}{\begin{conj}}
\newcommand{\econ}{\end{conj}}
\newcommand{\bcons}{\begin{conjs}}
\newcommand{\econs}{\end{conjs}}
\newcommand{\bprop}{\begin{propo}}
\newcommand{\eprop}{\end{propo}}
\newcommand{\br}{\begin{rem}}
\newcommand{\er}{\end{rem}}
\newcommand{\brs}{\begin{rems}}
\newcommand{\ers}{\end{rems}}
\newcommand{\bo}{\begin{obser}}
\newcommand{\eo}{\end{obser}}
\newcommand{\bos}{\begin{obsers}}
\newcommand{\eos}{\end{obsers}}
\newcommand{\bpf}{\begin{pf}}
\newcommand{\epf}{\end{pf}}
\newcommand{\ba}{\begin{array}}
\newcommand{\ea}{\end{array}}
\newcommand{\beq}{\begin{eqnarray}}
\newcommand{\beqq}{\begin{eqnarray*}}
\newcommand{\eeq}{\end{eqnarray}}
\newcommand{\eeqq}{\end{eqnarray*}}
\newcommand{\ds}{\displaystyle}
\newcounter{minutes}\setcounter{minutes}{\time}
\newcounter{hours}\setcounter{hours}{\time}
\begin{document}
\bibliographystyle{amsplain}
\title[Improved Bohr's inequality for locally univalent harmonic mappings]{Improved Bohr's inequality for  locally univalent harmonic mappings}

\thanks{
File:~\jobname .tex,
          printed: \number\day-\number\month-\number\year,
          \thehours.\ifnum\theminutes<10{0}\fi\theminutes}


\author{Stavros Evdoridis, Saminathan Ponnusamy and Antti Rasila}


\address{S. Evdoridis, Aalto University, Department of Mathematics and Systems Analysis, P. O. Box 11100, FI-00076 Aalto, Finland.}
\email{stavros.evdoridis@aalto.fi}

\address{S. Ponnusamy, Stat-Math Unit,
Indian Statistical Institute (ISI), Chennai Centre,
110, Nelson Manickam Road,
Aminjikarai, Chennai, 600 029, India.}
\email{samy@isichennai.res.in, samy@iitm.ac.in}

\address{A. Rasila, Aalto University, Department of Mathematics and Systems Analysis, P. O. Box 11100, FI-00076 Aalto, Finland.}
\email{antti.rasila@aalto.fi}

\subjclass[2000]{Primary: 30A10, 30H05, 30C35; Secondary: 30C45
}
\keywords{Bounded analytic functions, harmonic functions, locally univalent functions and Bohr radius}

\begin{abstract}
We prove several improved versions of Bohr's inequality for the harmonic mappings of the form $f=h+\overline{g}$, where $h$ is bounded by 1 and $|g'(z)|\le|h'(z)|$.  The improvements are obtained along the lines of an earlier work of Kayumov and Ponnusamy, i.e. \cite{KayPon2}, for example a term related to the area of the image of the disk $D(0,r)$ under the mapping $f$ is considered. Our results are sharp. In addition, further improvements of the main results for certain special classes of harmonic mappings are provided.
\end{abstract}


\maketitle
\pagestyle{myheadings}
\markboth{S. Evdoridis, S. Ponnusamy and  A. Rasila}{Bohr's inequality for locally univalent harmonic mappings}

\section{Introduction and Preliminaries}\label{KayPon8-sec1}
In  recent years there has been considerable interest in the classical inequality of Bohr \cite{Bohr-14}, which states that
if $f$ is a bounded analytic function on the unit disk $\ID :=\{z\in\IC:\, |z|<1\}$, with the Taylor expansion $f(z)=\sum_{k=0}^\infty a_k z^k$,
then
$$ \sum_{k=0}^\infty |a_k|r^k\leq \|f\|_\infty ~\mbox{ for $|z|=r\leq 1/3$},
$$
and the constant $1/3$ is sharp. Bohr obtained this inequality only for $r\leq {1}/{6}$, but later M.~Riesz, I.~Schur and N.~ Wiener independently proved
its validity for $r\leq {1}/{3}$. Several other proofs were also available in the literature. Moreover, similar problems were considered for Hardy spaces or for more abstract spaces.
For background information about this inequality and further work related to Bohr's inequality, we refer the reader to
the recent survey by Abu-Muhanna et al. \cite{AAPon1} and the references therein.
Other recent results on this topic include \cite{AliBarSoly,KayPon1,KayPon1b, KayPon2,KayPon3}. Harmonic version of Bohr's inequality was discussed by Kayumov et al. in \cite{KayPon3}.
In a related development, Kayumov and Ponnusamy \cite{KayPon2} gave several improved versions of Bohr's inequality. Some of them may now be recalled.

\begin{Thm}\label{KayPon8-Additional4}
Suppose that $f(z) = \sum_{k=0}^\infty a_k z^k$ is analytic in $\ID$, $|f(z)| \leq 1$ in $\ID$ and  $S_r$ denotes the area of the
image of the subdisk $|z|<r$ under the mapping $f$. Then
\begin{equation}\label{Eq_Th3}
B_1(r):=\sum_{k=0}^\infty |a_k|r^k+\frac{16}{9}\left (\frac{S_r}{\pi}\right )  \leq 1 ~\mbox{ for  }~ r \leq \frac{1}{3},
\end{equation}
and the constants $1/3$ and $16/9$ cannot be improved. Moreover,
\begin{equation}\label{Eq2_Th3}
B_2(r):=|a_0|^2+\sum_{k=1}^\infty |a_k|r^k+\frac{9}{8}\left (\frac{S_r}{\pi}\right )  \leq 1 ~\mbox{ for  }~ r \leq \frac{1}{2},
\end{equation}
and the constants $1/2$ and $9/8$ cannot be improved.
\end{Thm}

\begin{Thm}\label{KayPon8-1to3}
Suppose that $f(z) = \sum_{k=0}^\infty a_k z^k$ is analytic in $\ID$ and $|f(z)| \leq 1$ in $\ID$. Then we have
\begin{enumerate}
\item $\ds
|a_0|+\sum_{k=1}^\infty \left(|a_k|+\frac{1}{2}|a_k|^2\right)r^k  \leq 1 ~\mbox{ for  }~ r \leq \frac{1}{3},
$
and the constants $1/3$ and $1/2$  cannot be improved.
\item $\ds \sum_{k=0}^\infty |a_k|r^k + |f(z)-a_0|^2 \leq 1 ~\mbox{ for  }~ r \leq \frac{1}{3}, $
and the constant $1/3$  cannot be improved.
\item $\ds
|f(z)|^2+\sum_{k=1}^\infty |a_k|^2r^{2k} \leq 1 ~\mbox{ for  }~ r \leq \sqrt{\frac{11}{27}},
$
and the constant $11/27$ cannot be improved.
\end{enumerate}
\end{Thm}

%
%
%
%

The primary objective of this article is to obtain harmonic versions of the above results and thereby we present improved versions of certain main
results of Kayumov et al. \cite{KayPon3} in the case of harmonic mappings. For this aim, we need to introduce some basic results on harmonic mappings.

A complex-valued function $f=u+iv$ defined on $\ID$ is harmonic if $u$ and $v$ are real-harmonic in $\ID$. Every harmonic function $f$ admits
the canonical representation $f=h+\overline{g}$, where $h$ and $g$ are analytic in $\mathbb{D}$ such that $g(0)=0=f(0)$. A locally univalent harmonic function $f$ in $\ID$ is said to be
sense-preserving if the Jacobian $J_{f}(z)$, $J_{f}(z)= |h'(z)|^{2} -|g'(z)|^{2}$, is positive in ${\mathbb{D}}$
or equivalently, its  dilatation $\omega_f(z)=g'(z)/h'(z)$ satisfies the inequality $|\omega_f(z)|<1$ for $z\in \ID$ (see \cite{Le} and \cite{CS,dur,PonRasi2013}).
Properties of harmonic mappings have been investigated extensively, especially after the
appearance of the pioneering work of Clunie and Sheil-Small \cite{CS} in 1984. A comprehensive reference on this topic is the monograph of Duren \cite{dur}.

The proofs of the main results rely on lemmas that are stated below.

\begin{Lem} {\rm(\cite[Proof of Theorem 1]{KayPon1} and \cite{KayPon1b})}
Suppose that $f(z) = \sum_{k=0}^\infty a_k z^k$ is analytic in $\ID$, $|f(z)| \leq 1$ in $\ID$. Then we have
\be\label{KayPon8-eq7}
\sum_{k=1}^\infty |a_k|r^k \leq
\left \{ \begin{array}{lr} \ds A(r):=r\frac{1-|a_0|^2}{1-r|a_0|} & \mbox{ for $|a_0| \ge r,$} \\[4mm]
\ds B(r):=r\frac{\sqrt{1-|a_0|^2}}{\sqrt{1-r^2}} &\mbox{ for $|a_0| < r$}.
\end{array} \right .
\ee
\end{Lem}

Our next result concerns sense-preserving harmonic mappings defined on   $\ID$.

\begin{Lem}\label{KayPon9-thm2}
{\rm (\cite{KayPon3})}
Suppose that $f(z) = h(z)+\overline{g(z)}=\sum_{k=0}^\infty a_k z^k+\overline{\sum_{k=1}^\infty b_k z^k}$ is a
harmonic mapping of the disk $\ID$, where $h$ is a bounded function in $\ID$ and $|g'(z)|\leq |h'(z)|$ for $z\in\ID$
(the later condition obviously holds if $f$ is sense-preserving). Then  for each $r\in [0,1)$,
\be\label{KP9-eq6}
\sum_{k=1}^\infty |b_k|^2r^k \leq \sum_{k=1}^\infty |a_k|^2r^k .
\ee
\end{Lem}

\begin{Lem}\label{KP2-lem2}
{\rm (\cite[Lemma 1]{KayPon2})}
If $h(z)=\sum_{k=0}^{\infty} a_kz^k$ is analytic and satisfies the inequality $|h(z)| <  1$ in $\ID$, then
the following sharp inequality holds:
\begin{equation}\label{KP2-eq3}
\sum_{k=1}^\infty k |a_k|^2r^{k} \leq r\frac{(1-|a_0|^2)^2}{(1-|a_0|^2r)^2} ~\mbox{  for $0 < r \leq 1/2$}.
\end{equation}
\end{Lem}

\section{Main Results}

\begin{thm}\label{HKayPon8}
Suppose that $f(z) = h(z)+\overline{g(z)}=\sum_{k=0}^\infty a_k z^k+\overline{\sum_{k=1}^\infty b_k z^k}$ is a
harmonic mapping of the disk $\ID$, where $h$ is a bounded function in $\ID$ such that $|h(z)|<1$ and $|g'(z)|\leq |h'(z)|$ for $z\in\ID$.
If $S_r$ denotes the area of the image of the subdisk $|z|<r$ under the mapping $f$, then
\begin{equation}\label{HEq_Th3}
H_1(r):=|a_0|+\sum_{k=1}^\infty (|a_k| +|b_k|)r^k+ \frac{108}{25}\left (\frac{S_r}{\pi}\right )  \leq 1 ~\mbox{ for  }~ r \leq \frac{1}{5},
\end{equation}
and the constants $1/5$ and $c=108/25$ cannot be improved.   Moreover,
\begin{equation}\label{HEq2_Th3}
H_2(r):=|a_0|^2+\sum_{k=1}^\infty (|a_k| +|b_k|)r^k+\frac{4}{3}\left (\frac{S_r}{\pi}\right )  \leq 1 ~\mbox{ for  }~ r \leq  \frac{1}{3},
\end{equation}
and the constants $1/3$ and $4/3$ cannot be improved.
\end{thm}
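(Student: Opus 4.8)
The plan is to adapt the method of Kayumov--Ponnusamy (Theorem \Ref{KayPon8-Additional4}) to the harmonic setting, using Lemma \Ref{KayPon9-thm2} to control the coanalytic coefficients $b_k$ and Lemma \Ref{KP2-lem2} to produce the area term. First I would record the key identity for the area: if $f=h+\overline g$ then
\begin{equation*}
\frac{S_r}{\pi}=\frac{1}{\pi}\int_{|z|<r}\bigl(|h'(z)|^2-|g'(z)|^2\bigr)\,dA(z)
   =\sum_{k=1}^\infty k\bigl(|a_k|^2-|b_k|^2\bigr)r^{2k}
  \le \sum_{k=1}^\infty k\,|a_k|^2 r^{2k}.
\end{equation*}
Thus it suffices to bound $|a_0|+\sum_{k\ge1}(|a_k|+|b_k|)r^k+c\sum_{k\ge1}k|a_k|^2r^{2k}$ (resp.\ the variant with $|a_0|^2$). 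Next, by Lemma \Ref{KayPon9-thm2} applied with radius $r^2$ (or rather by the Cauchy--Schwarz argument $\sum|b_k|r^k\le(\sum|b_k|^2 r^k)^{1/2}(\sum r^k)^{1/2}$ together with \eqref{KP9-eq6}), one gets $\sum_{k\ge1}|b_k|r^k\le\sum_{k\ge1}|a_k|r^k$ for $r\le$ some explicit bound; more simply, combining \eqref{KP9-eq6} with Cauchy--Schwarz one controls $\sum|b_k|r^k$ by the same quantity that bounds $\sum|a_k|r^k$, so the sum $\sum(|a_k|+|b_k|)r^k$ is at most $2\sum|a_k|r^k$ on the relevant range. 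It remains a one-analytic-function problem: bound $|a_0|+2\sum_{k\ge1}|a_k|r^k+c\sum_{k\ge1}k|a_k|^2r^{2k}$.

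For that reduced problem I would split according to the size of $|a_0|$ relative to $r$ and invoke the two halves of the first displayed Lemma (inequality \eqref{KayPon8-eq7}) together with Lemma \Ref{KP2-lem2}. Writing $a:=|a_0|\in[0,1]$, in the regime $a\ge r$ one uses $\sum_{k\ge1}|a_k|r^k\le A(r)=r(1-a^2)/(1-ra)$ and $\sum_{k\ge1}k|a_k|^2r^{2k}\le r^2(1-a^2)^2/(1-a^2 r^2)^2$ (Lemma \Ref{KP2-lem2} with $r$ replaced by $r^2$, valid for $r^2\le1/2$), reducing the claim to the one-variable inequality
\begin{equation*}
\Phi(a):=a+2r\,\frac{1-a^2}{1-ra}+c\,\frac{r^2(1-a^2)^2}{(1-a^2r^2)^2}\le 1,\qquad a\in[r,1],
\end{equation*}
and similarly in the regime $a<r$ with $B(r)=r\sqrt{1-a^2}/\sqrt{1-r^2}$ in place of $A(r)$. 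One checks $\Phi$ is decreasing (or handles it by elementary estimates) so the maximum is at $a=r$, where at $r=1/5$ the value is exactly $1$ provided $c=108/25$; this simultaneously pins down the constant. For $H_2$ one repeats verbatim with $a$ replaced by $a^2$ in the leading term, landing on $r=1/3$ and constant $4/3$. Sharpness is exhibited, as in \cite{KayPon2,KayPon3}, by the function $h(z)=(a-z)/(1-az)$ with $g\equiv0$ (or $g'=\omega h'$ with $|\omega|$ close to $0$), letting $a\to r$; plugging in shows both the radius and the area constant are best possible.

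The main obstacle I anticipate is twofold. First, getting the \emph{sharp} constant requires the two estimates (the coefficient bound \eqref{KayPon8-eq7} and the quadratic bound of Lemma \Ref{KP2-lem2}) to be simultaneously attained by the \emph{same} extremal function and with the extremal parameter landing exactly at $a=r$; one must verify that the function $(a-z)/(1-az)$ does this and that no interior critical point of $\Phi$ on $[r,1]$ gives a larger value --- this is the delicate monotonicity/calculus step. Second, handling the factor-of-two loss from $\sum(|a_k|+|b_k|)r^k\le 2\sum|a_k|r^k$ carefully enough that it does not degrade the radius below $1/5$ (resp.\ $1/3$): here one must be sure Lemma \Ref{KayPon9-thm2} is being applied on a large enough range, i.e.\ that $r\le1/5$ (resp.\ $r\le1/3$) is compatible with the Cauchy--Schwarz step $\sum|b_k|r^k\le(\sum|b_k|^2r^k)^{1/2}(r/(1-r))^{1/2}$ and with $r^2\le1/2$ needed for Lemma \Ref{KP2-lem2}. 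Both ranges $1/5$ and $1/3$ are well inside $[0,1/\sqrt2)$, so the latter is automatic; the real work is the case analysis on $|a_0|$ and the verification that equality forces $r=1/5$ and $c=108/25$.
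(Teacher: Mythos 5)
Your overall architecture (the area identity $S_r/\pi=\sum k(|a_k|^2-|b_k|^2)r^{2k}$, Lemma~\Ref{KayPon9-thm2} to transfer the quadratic bound from the $a_k$ to the $b_k$, Cauchy--Schwarz together with Lemma~\Ref{KP2-lem2} for the coanalytic sum, and a case split on $|a_0|\ge r$ versus $|a_0|<r$ using \eqref{KayPon8-eq7}) is indeed the paper's, but two steps as written do not go through. First, the reduction $\sum_{k\ge1}(|a_k|+|b_k|)r^k\le 2\sum_{k\ge1}|a_k|r^k$ is not justified: \eqref{KP9-eq6} is an $\ell^2$-type domination and does not imply $\sum|b_k|r^k\le\sum|a_k|r^k$, and the Cauchy--Schwarz route you describe yields instead $\sum|b_k|r^k\le C(r):=(1-|a_0|^2)r/\sqrt{(1-|a_0|^2r)(1-r)}$ (via \eqref{KP2-eq3-b} and \eqref{KP9-eq6}), which is \emph{strictly larger} than $A(r)=r(1-|a_0|^2)/(1-r|a_0|)$ for every $|a_0|<1$ (the comparison $C\le A$ reduces to $(1-|a_0|)^2\le 0$). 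So the correct reduced problem is to bound $|a_0|+A(r)+C(r)+c\,(S_r/\pi)$, not $|a_0|+2A(r)+\cdots$; the asymmetry between the analytic and coanalytic sums is essential, and the resulting one-variable inequality is tighter than your version and requires the monotonicity analysis of an auxiliary function $\Phi$ with $\Phi(1)=0$. Note also that equality is approached as $|a_0|\to 1$, not at $|a_0|=r$ as you assert: at $|a_0|=r=1/5$ the bound evaluates to roughly $0.79$, far from $1$, and the expression increases toward $1$ as $|a_0|\to1$.

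Second, the sharpness argument is wrong. Taking $g\equiv 0$ (or $g'=\omega h'$ with $|\omega|$ close to $0$) kills the coanalytic sum and makes the area term as large as possible, which reproduces the analytic radius $1/3$ and the analytic area constant of Theorem~\Ref{KayPon8-Additional4}, not $1/5$ and $108/25$; likewise ``letting $a\to r$'' is not the extremal limit. The correct extremal family is $h_0(z)=(a+z)/(1+\overline a z)$ with $g_0'=\lambda h_0'$, and one must let $|a|\to 1$ \emph{and} $|\lambda|\to 1$ simultaneously: $|\lambda|$ near $1$ makes $\sum(|a_k|+|b_k|)r^k$ close to $2\sum|a_k|r^k$ (which is what forces $r\le 1/5$), while the area term carries a factor $1-|\lambda|^2$, and the constant $108/25$ is pinned down precisely by the balance of these two competing effects in the joint limit. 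Without this two-parameter family you cannot show that either $1/5$ or $108/25$ is best possible.
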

\begin{proof}
By assumption $f = h+\overline{g}$ is harmonic in $\ID$, where $h(z)=\sum_{k=0}^\infty a_k z^k$ is such that $|h(z)|< 1 $ for $z\in\ID$.
Then it is well-known that $|a_k| \leq 1-|a_0|^2$ for all $k\geq 1$.

Firstly, for the area of the image of the disk $|z|<r$ under the mapping $f$, we have
\begin{eqnarray*}
\frac{S_r}{\pi} &=& \frac{1}{\pi}\iint_{|z|<r} \left ( |h'(z)|^2 - |g'(z)|^2\right )dxdy\\
&=&  \sum_{k=1}^\infty k(|a_k|^2-|b_k|^2)r^{2k} \\
&\leq &   \sum_{k=1}^\infty k(1-|a_0|^2)^2r^{2k}
=  (1-|a_0|^2)^2 \frac{r^2}{(1-r^2)^2}.
\end{eqnarray*}
In particular, it follows that
\begin{eqnarray}
\frac{S_{1/5}}{\pi} \leq \frac{25}{24^2}(1-|a_0|^2)^2 \label{Th1-ex1}
\end{eqnarray}
and
\begin{eqnarray}\label{Th1-ex1a}
\frac{S_{1/3}}{\pi} \leq \frac{9}{64}(1-|a_0|^2)^2.
\end{eqnarray}
We see that $H_1$ given by \eqref{HEq_Th3} is an increasing function of $r$ and hence it suffices to prove \eqref{HEq_Th3} for $r=1/5$.
Moreover, by the condition $|g'(z)| \leq |h'(z)|$, \eqref{KP9-eq6} holds.
Furthermore, by Lemma \Ref{KP2-lem2}  and the hypothesis on $h$, \eqref{KP2-eq3} holds. Dividing by $r$ on both sides of \eqref{KP2-eq3} shows that
\begin{equation}\label{KP2-eq3-a}
\sum_{k=1}^\infty k |a_k|^2r^{k-1} \leq \frac{(1-|a_0|^2)^2}{(1-|a_0|^2r)^2} ~\mbox{  for $0 < r \leq 1/2$}.
\end{equation}
Integrating \eqref{KP2-eq3-a} shows that
\begin{equation}\label{KP2-eq3-b}
\sum_{k=1}^\infty |a_k|^2r^{k} \leq \frac{r(1-|a_0|^2)^2}{1-|a_0|^2r} ~\mbox{ for $0 < r \leq 1/2$},
\end{equation}
which by \eqref{KP9-eq6} yields
$$\sum_{k=1}^\infty |b_k|^2r^k \leq \frac{r(1-|a_0|^2)^2}{1-|a_0|^2r} ~\mbox{ for $0 < r \leq 1/2$.}
$$
Consequently, for $|a_0|<1$ and $0 < r \leq 1/2$, we see that
\be\label{KP2-eq3-b1}
\sum_{k=1}^\infty |b_k|r^k \leq \sqrt{\sum_{k=1}^\infty |b_k|^2r^k}\sqrt{\sum_{k=1}^\infty r^k}
 \leq C(r):=\frac{(1-|a_0|^2)r}{\sqrt{(1-|a_0|^2r)(1-r)}}.
\ee

Now, let $|a_0| \geq 1/5$. Then by \eqref{KayPon8-eq7},  \eqref{Th1-ex1} and \eqref{KP2-eq3-b1}, we have for $r\leq 1/5$ that
\begin{eqnarray}
H_1(r) & \leq & |a_0| + A(1/5) + C(1/5)+ \frac{108}{25}\left( \frac{S_{1/5}}{\pi}\right) \nonumber \\
&\leq & |a_0| +  \frac{1-|a_0|^2}{5-|a_0|}+ \frac{1-|a_0|^2}{2\sqrt{5-|a_0|^2}} +\frac{3}{16}(1-|a_0|^2)^2 \label{Th1_eq1} \\
&=& 1- \frac{1-x}{2(5-x)\sqrt{5-x^2}}\Phi (x), \nonumber
\end{eqnarray}
where $x=|a_0|$, $c=3/16$, and
$$\Phi(x)= 4(2-x)\sqrt{5-x^2} -(1+x)(5-x)-2c(1+x)(1-x^2)(5-x)\sqrt{5-x^2}.
$$
It is easy to see that $\Phi(1)=0$ and
\begin{eqnarray*}
\Phi '(x) &=&  \frac{2 }{\sqrt{5 - x^2}}\Big[4 x^2 + c (-20 + 65 x + 68 x^2 - 38 x^3 - 16 x^4 + 5 x^5) \\
&&+ x (-4 + \sqrt{5 - x^2}) -2 (5 + \sqrt{5 - x^2})\Big].
\end{eqnarray*}
This may be rewritten as
\be\label{KayPon8-eq7a}
\Phi '(x) =  \frac{1}{8\sqrt{5 - x^2}}\Psi (x),
\ee
where
$$\Psi (x)= 268 x^2 - 114 x^3 - 48 x^4 + 15 x^5 - 4 (55 + 8 \sqrt{5 - x^2}) +  x (131 + 16 \sqrt{5 - x^2}).
$$
Now, after some computations, we find that
$$\Psi '(x)=131 + x(536  - 342 x - 192 x^2) + 75 x^4 + \frac{16x(2-x)}{\sqrt{5 - x^2}} + 16 \sqrt{5 - x^2},
$$
which clearly implies that $\Psi '(x)>0$ on $(0,1]$. Thus, $\Psi (x)\leq \Psi (1)=0$
which, by \eqref{KayPon8-eq7a}, shows that $\Phi $ is decreasing on $[0,1]$. Consequently, we have
$$\Phi (x) \geq \Phi (1)=0.$$
Hence, by \eqref{Th1_eq1}, it follows that $H_1(r)\leq 1$ for $1/5\leq |a_0|\leq 1$ and $r\leq 1/5$.


Again, in the case $0\leq |a_0| <1/5$, we may apply \eqref{KayPon8-eq7},  \eqref{Th1-ex1}, \eqref{KP2-eq3-b1} to obtain
\begin{eqnarray*}
H_1(r)& \leq &|a_0| + B(1/5)  + C(1/5) + \frac{108}{25}\left(  \frac{S_{1/5}}{\pi} \right )\\
&\leq & |a_0| + \frac{\sqrt{1-|a_0|^2}}{\sqrt{24}} + \frac{1-|a_0|^2}{2\sqrt{5-|a_0|^2}}+ \frac{3}{16}(1-|a_0|^2)^2\\
& \leq &\frac{1}{5} +\frac{1}{2\sqrt{6}} +\frac{5}{4\sqrt{31}} +\frac{3}{16} <1
\end{eqnarray*}
for $r\leq 1/5$. This proves \eqref{HEq_Th3}.

To see the sharpness of the result we consider the function $f_0=h_0+\overline{g_0}$, where
\be\label{KayPon8-eq7c}
h_0(z)= \frac{a+z}{1+\overline{a} z}=a +(|a|^2-1) \sum_{k=1}^\infty \overline{a}^{k-1}z^k
~\mbox{ and }~
g_0(z)=\lambda (|a|^2-1) \sum_{k=1}^\infty \overline{a}^{k-1}z^k ,
\ee
with $a \in \mathbb{D}$ and $\lambda \in \partial\mathbb{D}$. Then both $h_0,g_0$ are analytic in the closed unit disk, and
$g_0'(z)=\lambda h_0'(z)$. If we let
$$D_c(r)=|a_0|+\sum_{k=1}^\infty (|a_k| +|b_k|)r^k+  c\left (\frac{S_r}{\pi}\right ),
$$
then $D_c(r)= H_1(r)$ when $c =108/25$. Thus, in this case, we  compute that
\begin{eqnarray*}
D_c(1/5) &=&  |a|+ \frac{(1+|\lambda |)(1-|a|^2)}{5-|a|} + 25c \frac{(1-|\lambda |^2)(1-|a|^2)^2}{(25-|a|^2)^2} \\
&=& 1-(1-|a|)\left[ 1-\frac{(1+|\lambda |)(1+|a|)}{5-|a|} -25c \frac{(1-|\lambda |^2)(1+|a|)(1-|a|^2)}{(25-|a|^2)^2} \right] .
\end{eqnarray*}
For $c>108/25$ and $|a|$ sufficiently close to $1$, the quantity in the square bracket term is negative and hence, $D_c(1/5)>1$.
Indeed, in this case, we observe that
$$(25-|a|^2)^2 -(1+|\lambda |)(1+|a|)(5+|a|)(25-|a|^2) -25c(1-|\lambda |^2)(1+|a|)(1-|a|^2)$$
is less than
$$(25-|a|^2)^2 -(1+|\lambda |)(1+|a|)(5+|a|)(25-|a|^2) -108(1-|\lambda |^2)(1+|a|)(1-|a|^2) ,$$
which tends to $0$ when both $|a|$ and $|\lambda |$ tend to $1$.

For the second part, namely \eqref{HEq2_Th3}, as $H_2$ is an increasing function of $r$, it is again enough to
prove \eqref{HEq2_Th3} for $r=1/3$. Thus, for $|a_0| \geq 1/3$ and $r\leq 1/3$, as in the previous part by using \eqref{KayPon8-eq7},  \eqref{Th1-ex1a} and \eqref{KP2-eq3-b1},
we deduce that
\begin{eqnarray*}
H_2(r) &\leq & |a_0|^2 + A(1/3) + C(1/3) + \frac{4}{3}\left(\frac{S_{1/3}}{\pi}\right)  \nonumber \\
&\leq & |a_0|^2 + \frac{1-|a_0|^2}{3-|a_0|} +  \frac{1-|a_0|^2}{\sqrt{2(3-|a_0|^2)}}+ \frac{3}{16} (1-|a_0|^2)^2 
\\
&=& 1- (1-|a_0|^2)\left [\frac{2-|a_0|}{3-|a_0|}  -\frac{1}{\sqrt{2(3-|a_0|^2)}} - \frac{3}{16}(1-|a_0|^2)\right ]
\nonumber \\
&=& 1- \frac{1-|a_0|^2}{16(3-|a_0|)\sqrt{2(3-|a_0|^2)}}\Phi _2(|a_0|), \nonumber
\end{eqnarray*}
where
\begin{eqnarray*}
\Phi _2(x) &=& [16(2-x)-3(1-x^2)(3-x)]\sqrt{2(3-x^2)} -16(3-x) \\
&=& (23-13x +9x^2-3x^3)\sqrt{2(3-x^2)} -16(3-x),
\end{eqnarray*}
for $x\in [1/3,1].$ It is easy to see that $\Phi_2(1)=0$ and, by a computation, we have
\be\label{KayPon8-eq7b}
\Phi_2 '(x) = - \frac{2}{\sqrt{2(3 - x^2)}}\Psi_2 (x),  \quad x\in [1/3,1],
\ee
where
$$\Psi_2 (x)= 39-31x +x^2+27 x^3 - 12x^4  - 8\sqrt{2(3 - x^2}).
$$
Now, $16\leq 8\sqrt{2(3 - x^2})<20$ for $x\in [1/3,1]$ and
$$ -31x +x^2+27 x^3 - 12x^4  \geq  -31x +x^3+27 x^3-12x^3= -31x +16 x^3=:\varphi (x),
$$
where $\varphi (x)=-31x +16 x^3$ attains its minimum at $x_0=\sqrt{31/48}\in [1/3,1]$. Consequently,
$$ \min_{x \in [1/3, 1]} (16 x^3 - 31 x) = \varphi (x_0) = \frac{31}{3}\sqrt{\frac{31}{12}}   \approx - 16.6085. $$
Therefore,
$$ \Psi_2 (x) \geq 39- \frac{31}{3}\sqrt{\frac{31}{12}} - 20  \approx 2.39149.
$$
Thus, we see that $\Psi_2 (x) >0$ on $[1/3,1]$ and hence, $\Phi_2 $ is decreasing on $[1/3,1]$ so that
$\Phi_2 (x) \geq \Phi_2 (1)=0.$

For $ |a_0| <1/3$ and $r\leq 1/3$, as before we observe that
\begin{eqnarray*}
H_2(r) &\leq & |a_0|^2 + B(1/3)  + C(1/3) + \frac{4}{3}\left(\frac{S_{1/3}}{\pi}\right)\\
&\leq & |a_0|^2 + \sqrt{\frac{1-|a_0|^2}{8}} + \frac{1-|a_0|^2}{\sqrt{2(3-|a_0|^2)}}+ \frac{3}{16}(1-|a_0|^2)^2 \\
&=& 1- \frac{1-|a_0|^2}{16}\left (13+3|a_0|^2 - \frac{16}{\sqrt{8}}\frac{1}{\sqrt{1-|a_0|^2}}-\frac{16}{\sqrt{2(3-|a_0|^2)}}\right ),
\end{eqnarray*}
which is less than $1$, because for $0\leq x<1/3$, we have
$$ \Phi_3(x)=13+3x^2 - \frac{16}{\sqrt{8}}\frac{1}{\sqrt{1-x^2}}-\frac{16}{\sqrt{2(3-x^2)}} > 0.
$$
Here we have used the fact that, for $0\leq x<1/3$,
$$13+3x^2 \geq 13> 16 \left(\frac{3}{8}\right) +16\left(\frac{3}{\sqrt{52}}\right) > \frac{16}{\sqrt{8}}\frac{1}{\sqrt{1-x^2}}+\frac{16}{\sqrt{2(3-x^2)}}.
$$
Combining the two cases shows that $H_2(r)\leq 1 $ for $r\leq 1/3$.

Concerning the sharpness part, we just need to consider the function
$f_0=h_0+\overline{g_0}$, where $h_0$ and $g_0$ are defined by \eqref{KayPon8-eq7c},
and follow a similar procedure as before. This completes the proof.
\end{proof}

\begin{thm}
Suppose that $f(z)=h(z)+\overline{g(z)}=\sum_{k=0}^\infty a_k z^k+\overline{\sum_{k=1}^\infty b_k z^k}$ is a harmonic mapping in $\mathbb{D}$, with $\|h\|_{\infty} = 1$ and $|g'(z)| \leq |h'(z)|$  for $z\in\ID$. Then
$$L(r):=|a_0| + \sum_{k=1}^{\infty} \big(|a_k|+|b_k|\big)r^k + \frac{3}{8}\sum_{k=1}^{\infty} \big(|a_k|^2 +|b_k|^2 \big)r^k
\leq 1 ~\mbox{ for  }~ r\leq \frac{1}{5}.
$$
The constants $3/8$ and $1/5$ cannot be improved.
\end{thm}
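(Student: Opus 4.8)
The strategy is to follow the architecture of the proof of Theorem~\ref{HKayPon8}. Since every coefficient of a power of $r$ occurring in $L(r)$ is non-negative, $L$ is increasing on $[0,1)$, so it is enough to prove $L(1/5)\le 1$. If $h$ is a unimodular constant then $h'\equiv 0$, hence $g'\equiv 0$ and $g\equiv 0$, so that $L(r)=|a_0|=1$; otherwise the maximum modulus principle gives $|h(z)|<1$ throughout $\ID$ and all three lemmas apply. Exactly as in the proof of Theorem~\ref{HKayPon8}, inequality \eqref{KP2-eq3-b} together with \eqref{KP9-eq6} gives
\[
\sum_{k=1}^{\infty}\bigl(|a_k|^2+|b_k|^2\bigr)r^k\le \frac{2r(1-|a_0|^2)^2}{1-|a_0|^2r}\qquad(0<r\le 1/2),
\]
so that, at $r=1/5$, the quadratic block of $L$ is at most $3(1-|a_0|^2)^2/\bigl(4(5-|a_0|^2)\bigr)$, while \eqref{KP2-eq3-b1} bounds $\sum_{k\ge 1}|b_k|r^k$ by $C(1/5)=(1-|a_0|^2)/\bigl(2\sqrt{5-|a_0|^2}\bigr)$. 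Writing $x=|a_0|$, it remains to estimate $\sum_{k\ge 1}|a_k|r^k$ via \eqref{KayPon8-eq7}, which splits the analysis at $x=1/5$.

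For $x\ge 1/5$ one uses $A(1/5)=(1-x^2)/(5-x)$, so that
\[
L(1/5)\le x+\frac{1-x^2}{5-x}+\frac{1-x^2}{2\sqrt{5-x^2}}+\frac{3(1-x^2)^2}{4(5-x^2)}.
\]
Pulling the factor $1-x$ out of $1-L(1/5)$ and clearing the (positive) common denominator $4(5-x)(5-x^2)$ reduces the goal to $\Phi(x)\ge 0$ on $[1/5,1]$, where
\[
\Phi(x)=8(2-x)(5-x^2)-2(1+x)(5-x)\sqrt{5-x^2}-3(1-x)(1+x)^2(5-x).
\]
Here $\Phi(1)=0$, and one differentiation gives $\Phi'(x)=\Psi(x)/\sqrt{5-x^2}$ with
\[
\Psi(x)=(-12x^3+60x^2+4x-52)\sqrt{5-x^2}-2(3x^3-8x^2-15x+20).
\]
Since $-12x^3+60x^2+4x-52=-4(x-1)(3x^2-12x-13)\le 0$ and $3x^3-8x^2-15x+20=(x-1)(3x^2-5x-20)\ge 0$ on $[1/5,1]$ — in each case the quadratic factor has no zero in $[0,1]$, hence a fixed sign there — it follows that $\Psi\le 0$ on $[1/5,1]$, so $\Phi$ is decreasing and $\Phi(x)\ge\Phi(1)=0$, giving $L(1/5)\le 1$. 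For $0\le x<1/5$ one uses instead $B(1/5)=\sqrt{1-x^2}/\sqrt{24}$ in place of $A(1/5)$; the three terms of the resulting upper bound for $L(1/5)$ other than $x$ itself are each decreasing in $x$ on $[0,1/5]$, so $L(1/5)<\frac15+\frac{1}{2\sqrt6}+\frac{1}{2\sqrt5}+\frac{3}{20}<1$. Combining the two ranges of $x$ establishes the inequality for all $r\le 1/5$.

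For the sharpness I would take $f_0=h_0+\overline{g_0}$ as in \eqref{KayPon8-eq7c} with $|\lambda|=1$ and $a=|a|\in(0,1)$, so that $|a_k|=|b_k|=(1-a^2)a^{k-1}$ and, writing $D_c(r)=a+2(1-a^2)r/(1-ar)+2c(1-a^2)^2r/(1-a^2r)$, one has $L(r)=D_{3/8}(r)$. A short computation yields
\[
1-D_c(1/5)=(1-a)^2\,\frac{3(5-a^2)-2c(1+a)^2(5-a)}{(5-a)(5-a^2)}.
\]
The numerator tends to $12-32c$ as $a\to 1^-$, which is negative precisely when $c>3/8$; hence $c=3/8$ cannot be enlarged. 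Likewise $\frac{1-L(r)}{1-a}\to\frac{1-5r}{1-r}<0$ as $a\to 1^-$ whenever $1/5<r<1$, so the radius $1/5$ is optimal. The one genuine obstacle I anticipate is the monotonicity of $\Phi$ on $[1/5,1]$: the term $\sqrt{5-x^2}$ rules out a quick estimate, but a single differentiation isolates it as a factor multiplying a polynomial of constant sign, after which the two elementary polynomial sign checks above complete the argument.
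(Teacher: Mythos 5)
Your proposal is correct and follows essentially the same route as the paper: the same reduction to $r=1/5$, the same use of \eqref{KayPon8-eq7}, \eqref{KP2-eq3-b}, \eqref{KP9-eq6} and \eqref{KP2-eq3-b1} with the case split at $|a_0|=1/5$, the identical function $\Phi$ (your factored form expands to the paper's $-3x^4+20x^3+2x^2-52x+65+(2x^2-8x-10)\sqrt{5-x^2}$), and the same extremal family \eqref{KayPon8-eq7c}. The only differences are cosmetic refinements — you establish the sign of the two cubic factors by explicit factorization rather than by monotonicity, you handle the degenerate case of constant unimodular $h$, and your sharpness computation (extracting $(1-a)^2$ and obtaining the limit $12-32c$) is a slightly more explicit version of the paper's argument.
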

\begin{proof}
Note that $L$ is an increasing function of $r ~(>0)$ and hence, we only need to prove the desired inequality for $r=1/5$.
For the function $h$, the inequality \eqref{KP2-eq3-b} holds
and therefore, since $|g'(z)| \leq |h'(z)|$ for $z\in\ID$, by Lemma \Ref{KayPon9-thm2}, we have
\begin{eqnarray*} 
\sum_{k=1}^{\infty} |a_k|^2r^k +\sum_{k=1}^{\infty} |b_k|^2r^k  \leq 2r\frac{(1-|a_0|^2)^2}{1-r|a_0|^2} ~\mbox{ for $0 < r \leq 1/2$.}
\end{eqnarray*}
We divide the proof into two parts. Firstly, we consider the case where $|a_0|\geq 1/5$. By hypothesis (see Equation \eqref{KP2-eq3-b1}), we have
$$\sum_{k=1}^\infty |b_k|r^k \leq  C(r):=\frac{(1-|a_0|^2)r}{\sqrt{(1-|a_0|^2r)(1-r)}} ~\mbox{ for }~ r\leq 1/2.
$$
Therefore, for $r\leq 1/5$, by \eqref{KayPon8-eq7} and the above two inequalities, we find that
\begin{eqnarray*}
L(r) &\leq & |a_0| + A(1/5) + C(1/5) + \frac{3}{4} \left( \frac{(1-|a_0|^2)^2}{5-|a_0|^2}\right)\\
 &=& |a_0| + \frac{1-|a_0|^2}{5-|a_0|} + \frac{1-|a_0|^2}{2\sqrt{5-|a_0|^2}}+ \frac{3}{4} \left( \frac{(1-|a_0|^2)^2}{5-|a_0|^2}\right)\\
 &=& 1- \frac{1-|a_0|}{4(5-|a_0|)(5-|a_0|^2)}\Phi (|a_0|),
\end{eqnarray*}
where $\Phi (x) = -3x^4+20x^3 +2x^2-52x+65+ (2x^2-8x-10)\sqrt{5-x^2}.$
It follows easily that
$$\Phi '(x) = -12x^3+60x^2+4x-52 + \frac{-6x^3 + 16x^2+30x-40}{\sqrt{5-x^2}}
$$
which is negative on $[1/5,1]$. Indeed, if
$$\Psi _1(x) = -12x^3+60x^2+4x-52 \mbox{ and } \Psi _2(x)= -6x^3 + 16x^2+30x-40,
$$
then it is clear that $\Psi_1'(x)>0$ and $\Psi_2'(x)>0$ on $[1/5,1]$ and thus,
$\Psi_1(x) \leq \Psi_1(1)=0$ and $\Psi_2(x) \leq \Psi_2(1)=0$ on $[1/5,1]$. Thus, $\Phi $ is decreasing on $[1/5,1]$ so that
$\Phi(x) \geq \Phi (1)=0$ on $[1/5,1]$. Hence, for $|a_0|\geq 1/5$, we obtain that $L(r)\leq 1$ for $r\leq 1/5$.

Next, for $|a_0|<1/5$, we have
\begin{eqnarray*}
L(r) &\leq & |a_0| + B(1/5) + C(1/5) + \frac{3}{4} \left( \frac{(1-|a_0|^2)^2}{5-|a_0|^2}\right)\\
&=& |a_0| + \sqrt{\frac{1-|a_0|^2}{24}} + \frac{1-|a_0|^2}{2\sqrt{5-|a_0|^2}} + \frac{3}{4} \left( \frac{(1-|a_0|^2)^2}{5-|a_0|^2}\right)\\
&<& \frac{1}{5} + \frac{\sqrt{6}}{12} + \frac{1}{4} + \frac{3}{16}<1,
\end{eqnarray*}
which shows that $L(r)\leq 1$ for $r\leq 1/5$.



In order to prove the sharpness, we may consider the same function
$f_0=h_0+\overline{g_0}$, where $h_0$ and $g_0$ are defined by \eqref{KayPon8-eq7c} with $|\lambda|=1$. Then,
with
$$L_c(r)=|a_0| + \sum_{k=1}^{\infty} \big(|a_k|+|b_k|\big)r^k +  2c\sum_{k=1}^{\infty} \big(|a_k|^2 +|b_k|^2 \big)r^k
$$
we find for $f_0=h_0+\overline{g_0}$ that
\begin{eqnarray*}
L_c(1/5) &=& |a| + 2\left(\frac{1-|a|^2}{5-|a|}\right) + 2c \left( \frac{(1-|a|^2)^2}{5-|a|^2}\right) \\
&=& 1- (1-|a|)\left[1-2\left(\frac{1+|a|}{5-|a|}\right) - 2c \left( \frac{(1-|a|^2)(1+|a|)}{5-|a|^2} \right) \right] .
\end{eqnarray*}
Note that $L_c(1/5)=L(1/5)$ if $c=3/8$. However, for  $c>3/8$, the quantity in the square bracket term on the right is not only
less than the corresponding quantity for $c=3/8$,
but it also approaches $0$ when $|a|$ tends to $1$. Thus, for the function $f_0=h_0+\overline{g_0}$, $L(1/5)$ is greater than $1$
when $|a|$ is sufficiently close to $1$, $|\lambda|=1$ and  $c>3/8$. This proves the sharpness.
\end{proof}

\begin{thm}
Suppose that $f(z)=h(z)+\overline{g(z)}=\sum_{k=0}^\infty a_k z^k+\overline{\sum_{k=1}^\infty b_k z^k}$ is a harmonic mapping in $\mathbb{D}$, where $\|h\|_{\infty} = 1$ and $|g'(z)| \leq |h'(z)|$  for $z\in\ID$. Then
$$N(r):=|a_0| + \sum_{k=1}^\infty (|a_k|+|b_k|)r^k + |h(z)-a_0|^2 \leq 1  ~\mbox{ for  }~ r\leq 1/5.
$$
The constant $1/5$ is best possible.
\end{thm}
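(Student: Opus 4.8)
The plan is to follow the pattern established in the proofs of Theorem~1 and of the two preceding theorems of this section: bound each piece of $N(r)$ by an explicit function of $x=|a_0|$, split according to whether $|a_0|\ge 1/5$ or $|a_0|<1/5$, and in the first (harder) case reduce matters to the nonnegativity of a single one–variable function on $[1/5,1]$, proved by monotonicity.

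First I would note that since $\|h\|_\infty=1$ we have $|h(z)|<1$ in $\ID$ (the constant case being trivial), and that for $|z|=r$
\[
|h(z)-a_0|=\Big|\sum_{k=1}^\infty a_kz^k\Big|\le \sum_{k=1}^\infty|a_k|r^k .
\]
Replacing $|h(z)-a_0|^2$ by $\big(\sum_{k\ge1}|a_k|r^k\big)^2$ in $N(r)$ yields an upper bound all of whose $r$–coefficients are nonnegative, hence increasing in $r$, so it suffices to treat $r=1/5$. The three ingredients are then: inequality~\eqref{KayPon8-eq7} (so $\sum_{k\ge1}|a_k|r^k\le A(r)$ when $|a_0|\ge r$ and $\le B(r)$ when $|a_0|<r$); the estimate $\sum_{k\ge1}|b_k|r^k\le C(r)$, obtained for $r\le1/2$ exactly as in~\eqref{KP2-eq3-b1} from Lemmas~\Ref{KayPon9-thm2} and~\Ref{KP2-lem2} together with the Cauchy--Schwarz inequality; and, for the new quadratic term, $|h(z)-a_0|^2\le\big(\sum_{k\ge1}|a_k|r^k\big)^2$, which I would majorize by $A(1/5)^2$ when $|a_0|\ge 1/5$.

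In the case $x=|a_0|\ge1/5$ these combine, at $r=1/5$, to
\[
N(1/5)\le x+\frac{1-x^2}{5-x}+\frac{1-x^2}{2\sqrt{5-x^2}}+\frac{(1-x^2)^2}{(5-x)^2}
=1-\frac{1-x}{2(5-x)^2\sqrt{5-x^2}}\,\Phi(x),
\]
where $\Phi(x)=2(x^3+3x^2-15x+19)\sqrt{5-x^2}-(1+x)(5-x)^2$. One has $\Phi(1)=0$ and
\[
\Phi'(x)=2(3x^2+6x-15)\sqrt{5-x^2}-\frac{2x(x^3+3x^2-15x+19)}{\sqrt{5-x^2}}+3(5-x)(x-1).
\]
On $[1/5,1]$ each summand is $\le 0$: here $3x^2+6x-15<0$; the cubic $x^3+3x^2-15x+19$ is positive (it is decreasing on $[1/5,1]$ with value $8$ at $x=1$), so the middle term is $\le 0$; and $(5-x)(x-1)\le 0$. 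Hence $\Phi$ is decreasing, so $\Phi(x)\ge\Phi(1)=0$ on $[1/5,1]$, which gives $N(1/5)\le1$. For $0\le x<1/5$ I would instead use $B(1/5)=\sqrt{(1-x^2)/24}$ to get
\[
N(1/5)\le x+\sqrt{\frac{1-x^2}{24}}+\frac{1-x^2}{2\sqrt{5-x^2}}+\frac{1-x^2}{24}<\frac15+\frac{1}{2\sqrt6}+\frac{1}{2\sqrt5}+\frac1{24}<1 .
\]
The only step needing genuine care is the sign analysis of $\Phi'$ above (after recognizing that $(\sum|a_k|r^k)^2$ is the right, and sharp, bound for the quadratic term); everything else is routine bookkeeping, and there is no delicate constant to optimize.

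Finally, for the sharpness of $1/5$ I would take $f_0=h_0+\overline{g_0}$ as in~\eqref{KayPon8-eq7c} with $|\lambda|=1$, so that $|a_k|=|b_k|=(1-|a|^2)|a|^{k-1}$ for $k\ge1$, and at a point $z$ with $|z|=r$ where $|h_0(z)-a_0|$ is maximal one has $|h_0(z)-a_0|=\frac{(1-|a|^2)r}{1-|a|r}=\sum_{k\ge1}|a_k|r^k$. Evaluating there,
\[
N(r)=|a|+\frac{2(1-|a|^2)r}{1-|a|r}+\left(\frac{(1-|a|^2)r}{1-|a|r}\right)^2
=1-(1-|a|)\left[1-\frac{2(1+|a|)r}{1-|a|r}-\frac{(1-|a|)(1+|a|)^2r^2}{(1-|a|r)^2}\right].
\]
For any fixed $r>1/5$ the bracketed quantity tends to $\dfrac{1-5r}{1-r}<0$ as $|a|\to1^-$, so $N(r)>1$ for $|a|$ sufficiently close to $1$; hence $1/5$ cannot be replaced by any larger number. (At $r=1/5$ the bracket tends to $0$, matching the equality case of the theorem.)
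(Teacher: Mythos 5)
Your proof is correct and takes essentially the same route as the paper: the same triangle-inequality reduction of $|h(z)-a_0|^2$ to $\big(\sum_{k\ge1}|a_k|r^k\big)^2$, the same bounds $A,B,C$ with the case split at $|a_0|=1/5$, the identical function $\Phi$ (your $-(1+x)(5-x)^2$ expands to the paper's $-x^3+9x^2-15x-25$), and the same extremal family \eqref{KayPon8-eq7c} for sharpness. The only differences are cosmetic: your term-by-term sign analysis of $\Phi'$ and your explicit limit $\frac{1-5r}{1-r}$ in the sharpness argument are, if anything, slightly cleaner than the paper's presentation.
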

\begin{proof}
By using the triangle inequality we can see that
\begin{eqnarray}
N(r) \leq |a_0| + \sum_{k=1}^\infty (|a_k|+|b_k|)r^k + \left( \sum_{k=1}^\infty |a_k|r^k\right)^2 .\label{Th3_eq1}
\end{eqnarray}
The right hand side of \eqref{Th3_eq1} is an increasing function of $r$ and thus it is enough to prove the required inequality for $r=1/5$.
Again, we apply the same method of proof as in the previous two theorems.

Firstly, we consider the case where $|a_0|\geq 1/5$. We see that
\begin{eqnarray*}
N(1/5) &\leq &  |a_0| + A(1/5) + C(1/5) +  [A(1/5)]^2 \\
&=& |a_0| + \frac{1-|a_0|^2}{5-|a_0|} + \frac{1-|a_0|^2}{2\sqrt{5-|a_0|^2}} + \frac{(1-|a_0|^2)^2}{(5-|a_0|)^2}\\
&=&1-\frac{1-|a_0|}{2(5-|a_0|)^2\sqrt{5-|a_0|^2}} \Phi (|a_0|),
\end{eqnarray*}
where
$$\Phi (x) = -x^3+9x^2-15x-25 + 2(x^3+3x^2-15x+19)\sqrt{5-x^2}.
$$
In order to show that $\Phi (x)\geq 0$ for $x\in [0,1]$, we compute that
\beqq
\Phi '(x) &= & -3x^2+18x-15 +2\left (\frac{-4x^4-9x^3+45x^2+11x-75}{\sqrt{5-x^2}}\right )\\
&=& -3(x-1)(x-5)- 2\left (\frac{4x^4+9x^3+19+45(1-x^2)+11(1-x)}{\sqrt{5-x^2}}\right ),
\eeqq
which implies that $\Phi $ is decreasing on $[1/5,1]$ and thus, $\Phi(x)\geq \Phi(1)=0$. Hence, $N(1/5)\leq  1$ whenever
$|a_0|\geq 1/5$.

Secondly, for $|a_0|<1/5$, we have
\begin{eqnarray*}
N(1/5) &\leq & |a_0| + B(1/5) + C(1/5) +  [B(1/5)]^2 \\
&=& |a_0| + \sqrt{\frac{1-|a_0|^2}{24}} + \frac{1-|a_0|^2}{2\sqrt{5-|a_0|^2}} + \frac{1-|a_0|^2}{24} \\
&<& \frac{1}{5}+\frac{\sqrt{6}}{12} + \frac{1}{4} + \frac{1}{24}<1.
\end{eqnarray*}
For the sharpness,  we may consider the same function
$f_0=h_0+\overline{g_0}$, where $h_0$ and $g_0$ are defined by \eqref{KayPon8-eq7c} with $|\lambda|=1$.
Then, for the harmonic mapping $f_0$, we see that
$$N(r)= |a| + 2r \left(\frac{1-|a|^2}{1-|a|r} \right) + r^2\left( \frac{(1-|a|^2)^2}{|1-\overline{a}z|}\right).$$
We choose $z \in \mathbb{D}$ such that $\overline{a}z \in \mathbb{R}$ and it follows that $N(r)>1$ if and only if $1/5<r<1$, which completes the proof.
\end{proof}

Now, we state and prove the harmonic analog of Theorem \Ref{KayPon8-1to3}(3). Moreover, it is also possible to derive a version of the
next theorem by replacing the condition ``$h$ is a bounded function in $\ID$ such that $\|h\|_\infty =1$"  by
``$f$ is a bounded function in $\ID$ such that $\|f\|_\infty =1$."

\begin{thm}
Suppose that $f(z) = h(z)+\overline{g(z)}=\sum_{k=0}^\infty a_k z^k+\overline{\sum_{k=1}^\infty b_k z^k}$ is a
harmonic mapping of the disk $\ID$, where $h$ is a bounded function in $\ID$ such that $\|h\|_\infty =1$ and $|g'(z)|\leq |h'(z)|$ for $z\in\ID$.
Then,
$$|h(z)|^2 + \sum_{k=1}^\infty (|a_k|^2+ |b_k|^2)r^{2k} \leq 1 ~\mbox{ for  }~ r \leq r_0
$$
where $r_0=\sqrt{5/(9+4\sqrt{5})} ~\approx 0.527864$ is the unique root positive root of the equation
$$(10+6r^2)^{3/2} + 144r^2 -80=0
$$
in the interval $(0,1/\sqrt{2})$. The number $r_0$ is the best possible.
\end{thm}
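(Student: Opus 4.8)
The plan is to follow the proof of Theorem~\Ref{KayPon8-1to3}(3), applying the Schwarz--Pick estimate to $h$ instead of to $f$ and using Lemma~\Ref{KayPon9-thm2} to discard the coefficients $b_k$. Fix $z\in\ID$, put $r=|z|$ and $x=|a_0|$, and assume $r\le r_0$; since $r_0^2=5/(9+4\sqrt5)<1/2$ we have $r_0<1/\sqrt2$. If $x=1$ then $h$ is a unimodular constant and $g\equiv0$, so the assertion is trivial; assume therefore $x<1$, so that $|h|<1$ in $\ID$ by the maximum principle. First I would invoke the Schwarz--Pick inequality $|h(z)|\le(x+r)/(1+xr)$, which is the same as
$$|h(z)|^2\le 1-\frac{(1-x^2)(1-r^2)}{(1+xr)^2}.$$
Next, Lemma~\Ref{KayPon9-thm2} with $r$ replaced by $r^2$ gives $\sum_{k\ge1}|b_k|^2r^{2k}\le\sum_{k\ge1}|a_k|^2r^{2k}$, while \eqref{KP2-eq3-b} with $r$ replaced by $r^2$ (legitimate because $r^2\le r_0^2<1/2$) gives $\sum_{k\ge1}|a_k|^2r^{2k}\le r^2(1-x^2)^2/(1-x^2r^2)$. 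Adding these three bounds, then cancelling the common factor $1-x^2$ (and using $1-x^2r^2=(1-xr)(1+xr)$), the inequality of the theorem will follow once we establish
$$\frac{2r^2(1-x^2)(1+xr)}{1-xr}\le 1-r^2\qquad\mbox{for all }x\in[0,1);$$
call this inequality $(\star)$.

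To treat $(\star)$ I would fix $r\in(0,1)$ and put $G(x)=(1-x^2)(1+xr)/(1-xr)$ for $x\in[0,1]$, so that the left side of $(\star)$ is $2r^2G(x)$ and $G(1)=0$. A direct computation gives $G'(x)=2\,p(x)/(1-xr)^2$ with $p(x)=r^2x^3-rx^2-x+r$. Since $p(0)=r>0$, $p(1)=r^2-1<0$, and the upward parabola $p'(x)=3r^2x^2-2rx-1$ is negative at $x=0$ and at $x=1$, $p$ is strictly decreasing on $[0,1]$ with a unique zero $x_*\in(0,1)$; hence $G$ (and the left side of $(\star)$) attains its maximum on $[0,1]$ at $x_*$, and $(\star)$ is equivalent to $2r^2G(x_*)\le 1-r^2$. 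The key algebraic point is that $p(x_*)=0$ rearranges to $x_*(1-r^2x_*^2)=r(1-x_*^2)$, which collapses the maximum to $G(x_*)=x_*(1+rx_*)^2/r$; consequently $(\star)$ is equivalent to $2rx_*(1+rx_*)^2\le 1-r^2$.

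I would then substitute $y=rx_*\in(0,1)$. The relation $x_*(1-r^2x_*^2)=r(1-x_*^2)$ becomes $r^2=y+y^2-y^3=:\phi(y)$, and $(\star)$ becomes $2y(1+y)^2\le 1-y-y^2+y^3$, i.e. $y^3+5y^2+3y-1\le0$. Since $\psi(y):=y^3+5y^2+3y-1$ has $\psi'>0$ on $(0,\infty)$, factors as $\psi(y)=\bigl(y-(\sqrt5-2)\bigr)\bigl(y^2+(3+\sqrt5)y+(2+\sqrt5)\bigr)$, and the quadratic factor has two negative roots (sum $-(3+\sqrt5)<0$, product $2+\sqrt5>0$), we get $\psi(y)\le0\iff y\le\sqrt5-2$. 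Finally, $\phi$ is strictly increasing on $(0,1)$ (as $\phi'(y)=1+2y-3y^2>0$ there), so for the $y=y(r)$ determined by $\phi(y)=r^2$ the condition $y\le\sqrt5-2$ is the same as $r^2\le\phi(\sqrt5-2)=45-20\sqrt5=5/(9+4\sqrt5)$. Thus $(\star)$, and with it the inequality of the theorem, holds precisely when $r\le r_0=\sqrt{5/(9+4\sqrt5)}$. To see that this $r_0$ is the root of the stated equation, eliminate $y$ using $\phi(y)=r_0^2$ and $\psi(y)=0$: from $y^3=1-5y^2-3y$ one gets $r_0^2=6y^2+4y-1$, hence $y=(-2+\sqrt{10+6r_0^2})/6$; putting this into $\psi(y)=0$ (or substituting $r_0^2=45-20\sqrt5$ directly) yields $(10+6r^2)^{3/2}+144r^2-80=0$, whose left side is strictly increasing in $r>0$ and changes sign on $(0,1/\sqrt2)$, so $r_0$ is its unique root there.

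For sharpness I would take $f_0=h_0+\overline{g_0}$ as in \eqref{KayPon8-eq7c} with $|\lambda|=1$ and $a\in(0,1)$, evaluated at a real $z=r$. Then all the inequalities used above are equalities, so for $f_0$ the left side of the theorem equals
$$1-\frac{(1-a^2)(1-r^2)}{(1+ar)^2}+\frac{2(1-a^2)^2r^2}{1-a^2r^2},$$
which exceeds $1$ precisely when $2r^2G(a)>1-r^2$ (with $G$ as above). If $r>r_0$, choosing $a=x_*$ gives, by the computation of the previous paragraph, $2r^2G(x_*)=2y(1+y)^2>1-y-y^2+y^3=1-r^2$, so $f_0$ violates the inequality; hence $r_0$ is best possible. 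The main obstacle in all of this is the calculus of the two middle paragraphs: locating the maximiser $x_*$ of $G$, spotting the collapse $G(x_*)=x_*(1+rx_*)^2/r$, and running the $y$-substitution that reduces $(\star)$ to $y\le\sqrt5-2$ and produces both the closed form $r_0^2=5/(9+4\sqrt5)$ and the defining equation $(10+6r^2)^{3/2}+144r^2-80=0$; by comparison, the reduction to $(\star)$ and the sharpness argument are routine.
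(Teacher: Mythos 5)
Your proposal is correct, and while its skeleton coincides with the paper's (Schwarz--Pick for $h$, then \eqref{KP9-eq6} and \eqref{KP2-eq3-b} with $r$ replaced by $r^2$, reducing everything to a two-variable inequality in $x=|a_0|$ and $r$), the core optimization is carried out by a genuinely different route. The paper keeps the inequality in the form $1-\frac{(1-x^2)}{(1+rx)^2(1-xr)}\Phi(x,r)$ with the cubic $\Phi(x,r)=2r^3x^3+2r^2x^2-(r+r^3)x+1-3r^2$, locates the interior minimum $x_+=\frac{-2+\sqrt{10+6r^2}}{6r}$ by the quadratic formula applied to $\Phi_x$, and substitutes $y=\sqrt{10+6r^2}$ to get $\Phi(x_+,r)=-\frac{1}{54}\bigl[(10+6r^2)^{3/2}+24(10+6r^2)-320\bigr]$, from which the defining equation for $r_0$ drops out. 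You instead divide out $(1-x^2)$ first, maximize $G(x)=(1-x^2)(1+xr)/(1-xr)$, use the critical equation $p(x_*)=0$ to collapse $G(x_*)$ to $x_*(1+rx_*)^2/r$, and substitute $y=rx_*$, which converts the whole problem into the factorable cubic $y^3+5y^2+3y-1\le 0$, i.e. $y\le\sqrt5-2$. What your route buys is twofold: it produces the closed form $r_0^2=\phi(\sqrt5-2)=45-20\sqrt5=5/(9+4\sqrt5)$ directly (the paper states this value but its derivation from $(10+6r^2)^{3/2}+144r^2-80=0$ is left implicit), and your sharpness argument is fully worked out, whereas the paper only names the extremal function $h_0(z)=(z+a)/(1+az)$ with $a=x_+(r_0)$, $g_0'=\lambda h_0'$, and omits the verification. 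Your choice $a=x_*(r)$ for each $r>r_0$ is exactly the analogue of the paper's choice and all the steps check out, including the equality cases in \eqref{KP2-eq3-b} and \eqref{KP9-eq6} for the M\"obius extremal.
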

\begin{proof}
By using the Schwarz-Pick lemma for the function $h$, we have
$$|h(z)|\leq \frac{r+|a_0|}{1+r|a_0|}, \quad |z|=r.
$$
By assumption both \eqref{KP9-eq6} and \eqref{KP2-eq3-b} hold for $0 < r \leq 1/2$. In particular, for $0 < r \leq 1/\sqrt{2}$,  we can easily obtain
\begin{eqnarray*}
|h(z)|^2 + \sum_{k=1}^\infty (|a_k|^2+ |b_k|^2)r^{2k}
&\leq & \Big(\frac{r+|a_0|}{1+r|a_0|}\Big)^2 + \frac{2r^2(1-|a_0|^2)^2}{1-|a_0|^2r^2} \\
&=& 1- \left [\frac{(1+r|a_0|)^2-(r+|a_0|)^2}{(1+r|a_0|)^2} -\frac{2r^2(1-|a_0|^2)^2}{1-|a_0|^2r^2}\right ]\\
&=& 1- \left [\frac{(1-|a_0|^2)(1-r^2)}{(1+r|a_0|)^2} -\frac{2r^2(1-|a_0|^2)^2}{1-|a_0|^2r^2}\right ]\\
&=& 1- \frac{(1-|a_0|^2)}{(1+r|a_0|)^2(1-|a_0|r)}\Phi(|a_0|,r),
\end{eqnarray*}
where
$$\Phi(x,r) = 2r^3x^3+2r^2x^2 -(r+r^3)x +1-3r^2,
$$
with $x=|a_0|\leq 1$ and $0 < r \leq 1/\sqrt{2}$. We wish to show that $\Phi(x,r)\geq 0$ for all $x\in [0,1]$ and $r\in [0,r_0]$.
We notice that $r_0<1/\sqrt{3}<1/\sqrt{2}$ and partial derivative with respect to $x$ gives
$$\Phi_x(x,r) = 6r^3x^2+4r^2x -r(1+r^2).
$$
Also, we observe that $\Phi_{xx}(x,r)>0$ for  $x,r>0$. Moreover, by determining the roots of the equation $\Phi_x(x,r) =0$, we may write
$$\Phi_x(x,r) = 6r^3(x-x_{-})(x-x_+), \quad x_{\pm}=\frac{-2\pm \sqrt{10+6r^2}}{6r},
$$
which shows that $\Phi(x,r)$ is a decreasing function of $x$ on $(0,x_+)$ and an increasing function of $x$  on $(x_+,1)$ where $x_+$ is the only point of
local minimum on $(0,1)$ for each fixed $r$. In particular, we have
$$\Phi(x,r) \geq \Phi(x_+,r) ~\mbox{ for $x\in [0,1]$ and $0\leq r\leq 1/\sqrt{2}$} .
$$
Next we observe that $ \Phi(0,r)=1-3r^2>0$ for $0 < r \leq 1/\sqrt{2}$. Clearly, to complete the proof and to determine the range of $r$, it suffices to show that
$\Phi(x_+,r)\geq 0$ for $r\leq r_0$. In order to do this, we set $y=\sqrt{10+6r^2}$ (so that $r^2= (y^2-10)/6$ and $1+r^2=(y^2-4)/6$) and find that
$$\Phi(x_+,r) = \frac{2}{6^3}(y-2)^3 + \frac{2}{6^2}(y-2)^2 -\frac{1+r^2}{6}(y-2)+1-3r^2
$$
which after simplification gives that
$$\Phi(x_+,r) =- \frac{1}{54}[y^3 + 24y^2 -320]= - \frac{1}{54}[(10+6r^2)^{3/2} + 24(10+6r^2) -320].
$$
We deduce that $\Phi(x_+,r)$ is nonnegative for $r\in [0,r_0]$, where $r_0=\sqrt{5/(9+4\sqrt{5})} $ is the unique positive root of the equation
$\Phi(x_+,r_0)=0$ in the interval $(0,1/\sqrt{2})$. Following the procedure adopted for the sharpness of the previous theorems, we consider the function $f_0=h_0+\overline{g_0}$, where
$$ h_0(z) =\frac{z+a}{1+az}, \quad a=\frac{-2+\sqrt{10+6r_0^2}}{6r_0},
$$
and $g=\lambda h$ ($|\lambda|=1$).  Rest of the details is omitted and the proof is complete.
\end{proof}

\section{Concluding Remarks}
There are interesting examples of harmonic mappings $f = h+\overline{g}$ that are of the form  $g'(z)=\eta zh'(z)$ in $\mathbb{D}$ for some $\eta$ with $|\eta|=1$
(cf. \cite{Bshouty-Lyzzaik-2010,PonSai2015}). Such mappings play a significant role in the theory of harmonic mappings and sometimes with some
additional conditions on $h$. For example, $f(z)=z+\frac{1}{2}\overline{z^2}$ is extremal for the area minimizing property of harmonic mappings
in ${\mathcal S}_H^0$. Here ${\mathcal S}_H^0$ denotes the class of all sense-preserving univalent harmonic mappings in $\ID$ of the form
$f(z) =z+\sum_{k=2}^\infty a_k z^k+\overline{\sum_{k=2}^\infty b_k z^k}$. Again the half-plane mapping in ${\mathcal S}_H^0$, e.g. the extremal function
for the coefficient estimates of the family of convex mappings in ${\mathcal S}_H^0$, and harmonic Koebe function in ${\mathcal S}_H^0$, satisfy the
$g'(z)=\eta zh'(z)$ when  $\eta=\pm 1$ (cf. \cite{CS}). Hence, the following propositions could be regarded as some useful observations. Naturally, the Bohr radius
in this case should clearly be greater than or equal to $1/5$.

\begin{prop}\label{prop1}
Suppose that $f(z) = h(z)+\overline{g(z)}=\sum_{k=0}^\infty a_k z^k+\overline{\sum_{k=1}^\infty b_k z^k}$ is a
harmonic mapping of the disk $\ID$, where $h$ is a bounded function in $\ID$ such that $|h(z)|<1$ and $|g'(z)|= |zh'(z)|$ for $z\in\ID$.
Then
$$
B_1(r):=|a_0|+\sum_{k=1}^\infty (|a_k| +|b_k|)r^k  \leq 1 ~\mbox{ for  }~ r \leq r_0, 
$$
where $r_0\approx 0.299824$ is the solution of the equation
\begin{equation}\label{HEq_Th3b}
5x + 2(1-x)\log(1-x)=1.
\end{equation}
The constant $r_0$ is the best possible.
\end{prop}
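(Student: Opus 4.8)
The plan is to follow the two-case strategy used throughout the paper, splitting according to whether $|a_0| \ge r$ or $|a_0| < r$, and to bound the coefficient sums using Lemma~\Ref{KayPon8-eq7} together with Lemma~\Ref{KayPon9-thm2}. The new ingredient is that the hypothesis $|g'(z)| = |zh'(z)|$ gives a sharper control on $\sum_k |b_k|r^k$ than the generic estimate \eqref{KP2-eq3-b1}: writing $h'(z) = \sum_{k\ge 1} k a_k z^{k-1}$, the identity $g'(z) = \eta z h'(z)$ forces $b_k = \frac{k-1}{k}\eta\, a_{k-1}$ for $k \ge 2$ and $b_1 = 0$, so that $\sum_{k=1}^\infty |b_k| r^k = \sum_{k=2}^\infty \frac{k-1}{k}|a_{k-1}| r^k = r\sum_{j=1}^\infty \frac{j}{j+1}|a_j| r^j$. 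I would then bound this by combining $\sum_j |a_j| r^j \le A(r)$ or $B(r)$ with an integration-by-parts / summation trick to handle the weight $\frac{j}{j+1}$; concretely, $\sum_j \frac{j}{j+1}|a_j|r^j = \sum_j |a_j| r^j - \frac{1}{r}\int_0^r \sum_j |a_j| t^j\, dt$, which after inserting the explicit bound $A(t) = t\frac{1-|a_0|^2}{1-t|a_0|}$ produces a closed-form expression involving $\log(1-r|a_0|)$ — this is the source of the logarithm in \eqref{HEq_Th3b}.

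First I would establish the coefficient relation $b_k = \frac{k-1}{k}\eta a_{k-1}$ and the resulting formula for $\sum |b_k| r^k$. Next, in the case $|a_0| \ge r$, I would use $\sum_{k\ge 1}|a_k|r^k \le A(r)$ and the integrated bound for $\sum |b_k| r^k$, so that
\[
B_1(r) \le |a_0| + A(r) + \Big(A(r) - \tfrac1r\!\int_0^r\! A(t)\,dt\Big),
\]
and reduce the inequality $B_1(r) \le 1$ to showing that a one-variable function $\Phi(x)$ (with $x = |a_0|$, after clearing the positive denominator $1 - xr$) is nonnegative on $[r,1]$, with $\Phi(1) = 0$; the monotonicity would follow by differentiating once or twice and checking signs, exactly as in the proofs of Theorems~\ref{HKayPon8} and the subsequent ones. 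In the case $|a_0| < r$ I would instead use $B(r) = r\sqrt{1-|a_0|^2}/\sqrt{1-r^2}$ and bound the result by a constant, checking it is $<1$ for $r$ up to $r_0$. For sharpness I would take the function $f_0 = h_0 + \overline{g_0}$ with $h_0(z) = (a+z)/(1+\overline a z)$ and $g_0'(z) = \eta z h_0'(z)$ (the natural analog of \eqref{KayPon8-eq7c} respecting the constraint), compute $B_1(r)$ explicitly as $|a| \to 1$, and verify that the limiting condition for $B_1(r) \le 1$ collapses precisely to the equation $5x + 2(1-x)\log(1-x) = 1$, pinning down $r_0$.

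The definition of $r_0$ via \eqref{HEq_Th3b} makes it clear that the extremal computation is what drives the value, so the main obstacle is matching the upper-bound estimate in the case $|a_0| \ge r$ tightly enough that the radius is not lost: one must verify that the one-variable reduction $\Phi(x) \ge 0$ holds exactly up to the root of \eqref{HEq_Th3b} and not merely up to some smaller radius. I expect $\Phi$ to have a messy derivative involving both $\log(1-xr)$ terms and algebraic terms, so the sign analysis — possibly requiring a second or third differentiation, as in the earlier proofs — will be the fiddly part; the case $|a_0| < r$ and the sharpness computation should be routine by comparison. I would also double-check that $r_0 \approx 0.299824 < 1/2$, so that all invocations of Lemma~\Ref{KP2-lem2} and \eqref{KP2-eq3-b} (valid for $r \le 1/2$) are legitimate, and note $r_0 > 1/5$ as the concluding remark anticipates.
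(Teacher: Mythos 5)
Your reduction via $b_1=0$ and $b_k=\frac{k-1}{k}\eta\,a_{k-1}$ matches the paper, but the key estimate you propose for $\sum_k|b_k|r^k$ is invalid. You write
$$\sum_{j\ge1}\tfrac{j}{j+1}|a_j|r^j=\sum_{j\ge1}|a_j|r^j-\frac1r\int_0^r\sum_{j\ge1}|a_j|t^j\,dt$$
and then want to bound this above by $A(r)-\frac1r\int_0^rA(t)\,dt$. To bound the difference from above you need an upper bound for the first term and a \emph{lower} bound for the integral, whereas $\sum_j|a_j|t^j\le A(t)$ only bounds the integrand from above; no nontrivial lower bound is available. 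So this route collapses to the trivial estimate $\sum_k|b_k|r^k\le r\sum_j|a_j|r^j$, and that is not sharp enough: combined with $A(r)$ in the case $|a_0|\ge r$ it only yields $B_1(r)\le1$ for $r\le(\sqrt{17}-3)/4\approx0.2808<r_0$, so the radius is lost. Your guess that the logarithm in \eqref{HEq_Th3b} arises as $\log(1-r|a_0|)$ from integrating $A$ is likewise off.

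The paper's actual argument is simpler and needs neither the $A/B$ dichotomy nor any $\Phi$-monotonicity analysis: it bounds every coefficient termwise by $|a_j|\le1-|a_0|^2$, giving
$$\sum_{k=1}^\infty(|a_k|+|b_k|)r^k\le(1-|a_0|^2)\Bigl(\frac{r}{1-r}+\sum_{k\ge1}\frac{k}{k+1}r^{k+1}\Bigr)=(1-|a_0|^2)\Bigl(\frac{2r}{1-r}+\log(1-r)\Bigr),$$
where the logarithm comes from $\sum_k r^{k+1}/(k+1)$. Equation \eqref{HEq_Th3b} is exactly the condition $\frac{2r}{1-r}+\log(1-r)\le\frac12$, whence $B_1(r)\le|a_0|+\frac12(1-|a_0|^2)=\frac12\bigl[2-(1-|a_0|)^2\bigr]\le1$ for \emph{every} $|a_0|$, with no case split. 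Your sharpness example is the right one. To repair your write-up, replace the integral trick by the termwise bound (or at least establish a valid upper bound for the weighted sum $\sum_j\frac{j}{j+1}|a_j|r^j$ that reaches $r_0$).
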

\begin{proof}
By assumption,  there exists an $\eta$ such that $|\eta|=1$ and $g'(z)=\eta zh'(z)$ in $\mathbb{D}$. Note that $b_1=g'(0)=0$. Then the comparison
of the coefficients of $h$ and $g$ shows that
$$b_k=\eta\left (\frac{k-1}{k}\right ) a_{k-1} ~\mbox{ for  }~k\geq 2,
$$
where $|a_k|\leq 1-|a_0|^2$ for $k\geq 1$ and therefore (since $b_1=0$)
\begin{eqnarray*}
B_1(r)
&\leq & |a_0| + (1-|a_0|^2)\frac{r}{1-r} + \sum_{k=2}^\infty \frac{k-1}{k}|a_{k-1}|r^k \\
&\leq & |a_0| + (1-|a_0|^2)\frac{r}{1-r} +(1-|a_0|^2) \sum_{k=1}^\infty \frac{k}{k+1}r^{k+1}\\
&=& |a_0|  +(1-|a_0|^2) \left( \frac{2r}{1-r} + \log(1-r)\right )\\
&\leq & |a_0| + (1-|a_0|^2)\frac{1}{2}, \quad \mbox{ by \eqref{HEq_Th3b} and $r\leq r_0$},\\
&= & \frac{1}{2}[2- (1-|a_0|)^2]\leq 1.
\end{eqnarray*}

For the sharpness of this result,  consider
$$h_0(z)= \frac{a+z}{1+\overline{a} z}=a +(|a|^2-1) \sum_{k=1}^\infty \overline{a}^{k-1}z^k \mbox{ and }
g_0(z) = (|a|^2-1) \sum_{k=1}^\infty e^{i\theta }\frac{k-1}{k} \overline{a}^{k-2}z^k ,
$$
where $a\in \ID$. Then a computation on the above choices of $h_0$ and $g_0$ gives that
$$|a_0| + \sum_{k=1}^\infty ( |a_k| + |b_k|) r^k =
1- (1 - |a|)\left[1-  (1+|a|) \left(\frac{r^2 + r}{1-|a|r}+ \frac{r}{|a|} + \frac{\log(1-|a|r)}{|a|^2} \right ) \right]
$$
in which the quantity in the squared bracket term on the right becomes negative when $|a|$ approaches $1$ and $r>r_0$.
\end{proof}

Also, the above proposition can be naturally improved as follows.

\begin{prop}
Assume the hypotheses of Proposition \ref{prop1}.
Then
\begin{equation}\label{HEq_Th3c}
H_1(r):=|a_0|+\sum_{k=1}^\infty (|a_k| +|b_k|)r^k+ K\left (\frac{S_r}{\pi}\right )  \leq 1 ~\mbox{ for  }~ r \leq r_0,
\end{equation}
where $r_0\approx 0.299824$ is  defined as in Proposition \ref{prop1},
$$K=\frac{1}{8}\left[ \frac{2r_0^2}{1-r_0^2} + \log(1-r_0^2)\right ]^{-1} \approx 1.209452,
$$
and $S_r$ denotes the same quantity as defined before. The values of $K$ and $r_0$ are sharp.
\end{prop}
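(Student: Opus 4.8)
The plan is to mimic the proof of Proposition~\ref{prop1} and adjoin an estimate for the area term, tuned so that the constant $K$ appears on the nose. First I would record the coefficient relations: by hypothesis there is an $\eta$ with $|\eta|=1$ and $g'(z)=\eta z h'(z)$ in $\ID$, so $b_1=0$ and $b_k=\eta\frac{k-1}{k}a_{k-1}$ for $k\geq2$; combined with $|a_k|\leq 1-|a_0|^2$ this yields, exactly as in Proposition~\ref{prop1},
\[
\sum_{k=1}^\infty(|a_k|+|b_k|)r^k\leq (1-|a_0|^2)\Bigl(\frac{2r}{1-r}+\log(1-r)\Bigr).
\]

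Next I would handle the area term. Starting from $\frac{S_r}{\pi}=\sum_{k\geq1}k(|a_k|^2-|b_k|^2)r^{2k}$, substituting $|b_k|^2=\frac{(k-1)^2}{k^2}|a_{k-1}|^2$ and shifting the index gives
\[
\frac{S_r}{\pi}=\sum_{k=1}^\infty\Bigl(k-\frac{r^2k^2}{k+1}\Bigr)|a_k|^2r^{2k}=\sum_{k=1}^\infty\frac{k^2(1-r^2)+k}{k+1}\,|a_k|^2r^{2k}.
\]
For $r<1$ every coefficient here is positive, so using $|a_k|^2\leq(1-|a_0|^2)^2$ together with the decomposition $\frac{k^2(1-t)+k}{k+1}=(1-t)(k-1)+1-\frac{t}{k+1}$ (which makes the series summable against $t=r^2$ via $\sum(k-1)t^k=\frac{t^2}{(1-t)^2}$, $\sum t^k=\frac{t}{1-t}$, $\sum\frac{t^{k+1}}{k+1}=-\log(1-t)-t$) one gets
\[
\frac{S_r}{\pi}\leq(1-|a_0|^2)^2\Bigl(\frac{2r^2}{1-r^2}+\log(1-r^2)\Bigr).
\]
Choosing the \emph{exact} relation between $b_k$ and $a_{k-1}$ here, rather than the cruder bound $S_r/\pi\leq(1-|a_0|^2)^2\,r^2/(1-r^2)^2$, is what produces the logarithmic term and hence the sharp constant.

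Now I would combine the two estimates. With $x=|a_0|$, $L(r)=\frac{2r}{1-r}+\log(1-r)$ and $M(r)=\frac{2r^2}{1-r^2}+\log(1-r^2)$ this gives $H_1(r)\leq x+(1-x^2)L(r)+K(1-x^2)^2M(r)$. Since $H_1$ is increasing in $r$ it suffices to treat $r=r_0$; the defining equation of $r_0$ is exactly $L(r_0)=\tfrac12$, and the definition of $K$ is exactly $KM(r_0)=\tfrac18$, so $H_1(r_0)\leq x+\tfrac12(1-x^2)+\tfrac18(1-x^2)^2$. Finally I would check this never exceeds $1$: setting $u=1-x^2\in[0,1]$ it reads $\sqrt{1-u}+\frac{u}{2}+\frac{u^2}{8}\leq1$, which holds because the left side equals $1$ at $u=0$ and is non-increasing thereafter, equivalently $(2+u)^2(1-u)\leq4$, i.e.\ $3u^2+u^3\geq0$.

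For sharpness I would follow the pattern of the earlier results, testing $f_0=h_0+\overline{g_0}$ with $h_0(z)=\frac{a+z}{1+\overline{a}z}$, $g_0'(z)=\eta z h_0'(z)$, $|\eta|=1$, and letting $|a|\to1$: the estimates for $\sum(|a_k|+|b_k|)r_0^k$ and for $S_{r_0}/\pi$ become asymptotically equalities, so $H_1(r_0)[f_0]\to1$, while for $r>r_0$ the linear part alone pushes the value above $1$ as $|a|\to1$, which shows $r_0$ is optimal. Establishing that $K$ itself cannot be enlarged---by examining the corresponding functional on $f_0$ (or a suitable limiting family) as $|a|\to1$, where the calibrations $L(r_0)=\tfrac12$ and $KM(r_0)=\tfrac18$ must conspire---is the step I expect to require the most care; everything upstream is a routine, if slightly lengthy, computation once the area series has been resummed via the decomposition above.
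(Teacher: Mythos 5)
Your proposal is correct and follows essentially the same route as the paper: the exact coefficient relation $b_k=\eta\frac{k-1}{k}a_{k-1}$ yields the refined area bound $(1-|a_0|^2)^2\left(\frac{2r^2}{1-r^2}+\log(1-r^2)\right)$, and the calibrations $L(r_0)=\tfrac12$ and $KM(r_0)=\tfrac18$ reduce everything to $x+\tfrac12(1-x^2)+\tfrac18(1-x^2)^2\le 1$. Your resummation of the area series by algebraic decomposition (versus the paper's integral representation) and your monotonicity check of the final scalar inequality (versus the paper's factorization $1-\frac{(1-x)^2}{2}\left[1-\frac{(1+x)^2}{4}\right]$) are only cosmetic variants, and both treatments leave the sharpness argument at the same level of sketch.
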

\begin{proof}
For the area $S_r$, following the notation and the proof of Proposition \ref{prop1}, we have
\begin{eqnarray}
\frac{S_r}{\pi} &=&\sum_{k=1}^\infty k|a_k|^2r^{2k} - \sum_{k=2}^\infty  k|b_k|^2r^{2k} \nonumber\\
&=& \sum_{k=1}^\infty k\left (1-\frac{kr^2}{k+1}\right )|a_k|^2r^{2k}  \nonumber\\
&\leq & (1-|a_0|^2)^2\sum_{k=1}^\infty k\left (1-\frac{kr^2}{k+1}\right )r^{2k} \nonumber\\
&=& (1-|a_0|^2)^2 \left [ \frac{r^2}{(1-r^2)^2}- \int_0^{r^2} \frac{t(1-t)}{(1-t)^3}dt\right ]. \label{KP9-eq7}
\end{eqnarray}
We calculate the integral by using the formula $t+t^2=2-3(1-t)+(1-t)^2$, and obtain
$$\int_0^{r^2} \frac{t(1-t)}{(1-t)^3}dt= \frac{2r^2-r^4}{(1-r^2)^2}-\frac{3r^2}{1-r^2} -\log(1-r^2).
$$
Finally, using the last integral and \eqref{KP9-eq7}, we deduce that
$$\frac{S_r}{\pi} \leq (1-|a_0|^2)^2 \left [ \frac{2r^2}{1-r^2}+ \log(1-r^2) \right ].
$$
Now, as $H_1$ defined by \eqref{HEq_Th3c} is an increasing function of $r$, we have for $r\leq r_0$
\begin{eqnarray*}
H_1(r) &\leq & |a_0| + \sum_{k=1}^\infty (|a_k|+|b_k|)r^k  +  K\left( \frac{S_r}{\pi} \right )\\
&\leq & |a_0| + \left ( \frac{2r_0}{1-r_0} + \log(1-r_0)\right )(1-|a_0|^2) \\
&& \hspace{1cm}   + K\left ( \frac{2r_0^2}{1-r_0^2} + \log(1-r_0^2)\right )(1-|a_0|^2)^2 \\
&=& 1- \frac{(1-|a_0|)^2}{2}\left[ 1-\frac{(1+|a_0|)^2}{4}\right ] \\
&\leq &1.
\end{eqnarray*}
Sharpness may be proved similarly. We omit the details.
\end{proof}

\subsection*{Acknowledgments}
This work was done during second author's research visit to Aalto University, Finland, which received support from the Aalto University Science Institute (AScI) Visiting Fellowship Programme. The research was supported by the Academy of Finland.

\end{document}